\theoremstyle{plain}
\newtheorem{theorem}{Theorem}
\theoremstyle{definition}
\theoremstyle{remark}
\newcommand{\reals}{\mathbb{R}}
\newcommand{\abs}[1]{\left\lvert#1\right\rvert} 
\DeclareMathOperator{\im}{im}      
\DeclareMathOperator{\spec}{Spec}
\DeclareMathOperator{\ind}{ind}
\DeclareMathOperator{\scal}{scal}  
\newcommand{\forget}[1]{}
\newcommand{\innerprod}[1]{\langle #1 \rangle}
\global\let\c@equation=\c@theorem}
\begin{document}
\pagestyle{myheadings}
\markboth{Thomas Schick}{Invertibility of Dirac
with Hilbert-$A$-module coefficients}

\title{A note on invertibility of the Dirac operator twisted
with Hilbert-$A$-module coefficients}

\author{ Thomas Schick\thanks{
\protect\href{mailto:thomas.schick@math.uni-goettingen.de}{e-mail:
  thomas.schick@math.uni-goettingen.de}
\protect\\
\protect\href{http://www.uni-math.gwdg.de/schick}{www:~http://www.uni-math.gwdg.de/schick}}\\
Mathematisches Institut\\
Georg-August-Universit\"at G{\"o}ttingen\\
Germany}
\maketitle

\begin{abstract}
  Given a closed connected spin manifold $M$ with non-negative scalar
  curvature which is non-zero, we show that the Dirac operator twisted with
  any flat Hilbert module bundle is invertible.
\end{abstract}

Let $M$ be a compact spin manifold with Riemannian metric $g$. It is an
important and standard fact that the spectrum of
the spin Dirac 
operator is restricted by the scalar curvature of $M$. By the
Schr\"odinger-Lichnerowicz formula, if
$\scal_g(x)\ge 4c^2$ for every $x\in M$ then
\begin{equation*}
spec(D)\cap (-c,c)=\emptyset.
\end{equation*}

This is a direct consequence of the Schr\"odinger-Lichnerowicz formula
\begin{equation*}
  D^2 = \nabla^*\nabla + \frac{\scal_g}{4},
\end{equation*}
  where the connection Laplacian $\nabla^*\nabla$ is a non-negative operator,
  and the operator of multiplication by $\frac{\scal_g}{4}$ is bounded below by
  $c^2$.

  Note that this argument works exactly the same way if we replace  the
  classical Atiyah-Singer Dirac operator $D$ by $D_E$, where we twist with a
  bundle $E$ with flat connection, because the Schr\"odinger-Lichnerowicz
  formula remains unchanged. In particular, this also works for twists with a
  bundle $E$ of $Hilbert$-$A$-modules with a $C^*$-algebra $A$, such that
  $D_E$ is an operator in the Mishchenko-Fomenko calculus. This is important
  for higher index theory, then $\ind(D_E)\in K_*(A)$, and the invertibility
  implies of course that $\ind(D_E)=0\in K_*(A)$, with its applications to the
  topology of positive scalar curvature, compare e.g.~\cite{SchickICM}.

There is another easy generalization of the above spectral consideration which
is well known: if $M$ is connected and $\scal_g(x)\ge 0$, with
$\scal_g(x_0)>0$ for some $x_0\in M$, then we can argue as follows:

For the usual untwisted Dirac operator $D$ take $s\in\ker(D)$. Then
\begin{equation*}
0=\innerprod{Ds,Ds}_{L^2} = \innerprod{\nabla S,\nabla
  s}_{L^2} + \int_{M} \frac{\scal_g(x)}{4} \innerprod{s(x),s(x)}_x \,dvol_g.
\end{equation*}

The assumptions then imply that $\nabla s=0$, i.e.~$s$ is parallel and in
particular $\innerprod{s(x),s(x)}_x$ is constant. Because by assumption
$\scal_g(x)>0$ for $x$ in a neighborhood of $x_0$, this implies $s=0$.
Consequently, $\ker(D)=0$

As $D$ has discrete spectrum, this implies again that $D$ is invertible,
i.e.
\begin{equation*}
spec(D)\cap (-\epsilon,\epsilon)=\emptyset\text{ for some} \epsilon>0.
\end{equation*}

In this note, we now prove that this extends to all operators $D_E$ for flat
Hilbert-$A$-module bundles $E$, even though in general
the spectrum of $D_E$ is not discrete if $\dim(A)=\infty$. In that case, the
argument just given 
does not work. It only gives $\ker(D_E)=\{0\}$, which does not imply that
$D_E$ is invertible.

This question came up in the analysis of the geometry of the space of metrics
of non-negative scalar curvature carried out recently in \cite{SchickWraith}.

\begin{theorem}
  Let $M$ be a connected closed spin manifold with Riemannian metric $g$, $A$ a $C^*$-algebra and $E\to
  M$ a flat Hilbert $A$-module bundle over $M$. Assume that $\scal_g(x)\ge 0$
  for all $x\in M$, and $\scal_g(x_0)>0$ for some $x_0\in M$.

  Then $D_E$ is invertible, i.e.~there is $c_0>0$ such that $\spec(D_E)\cap
  (-c_0,c_0)=\emptyset$. 
\end{theorem}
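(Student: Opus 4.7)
My plan is to reduce invertibility to the $A$-valued coercive estimate
\begin{equation*}
\langle D_E s, D_E s\rangle_{L^2} \;\ge\; c_0^2\, \langle s, s\rangle_{L^2} \quad \text{in }A
\end{equation*}
for some $c_0>0$ and every smooth section $s$. For the self-adjoint regular operator $D_E$ on a Hilbert $A$-module in the Mishchenko-Fomenko calculus, such a form bound passes through the functional calculus to the operator inequality $D_E^2\ge c_0^2\cdot 1$, which is equivalent to $\spec(D_E)\cap(-c_0,c_0)=\emptyset$. Feeding the Schr\"odinger-Lichnerowicz identity into the desired estimate, the task becomes to prove
\begin{equation*}
\langle\nabla s,\nabla s\rangle_{L^2}+\int_M\Big(\frac{\scal_g(x)}{4}-c_0^2\Big)\langle s(x),s(x)\rangle_x\,dvol_g \;\ge\; 0 \quad\text{in }A.
\end{equation*}

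To convert this $A$-valued positivity into a scalar problem I would test against an arbitrary state $\tau$ on $A$, using the standard fact that $a\ge 0$ in $A$ iff $\tau(a)\ge 0$ for every state. Setting $f_\tau(x):=\tau\langle s(x),s(x)\rangle_x$ and $g_\tau(x):=\tau|\nabla s|_x^2$, both non-negative functions on $M$, an application of $\tau$ to the compatibility identity $X\langle s,s\rangle_x=\langle\nabla_Xs,s\rangle+\langle s,\nabla_Xs\rangle$ combined with Cauchy-Schwarz in the GNS representation of $\tau$ yields the pointwise bound $|\nabla f_\tau|^2\le 4 f_\tau g_\tau$. Regularising by $\sqrt{f_\tau+\epsilon}$ and letting $\epsilon\to 0$ then gives the Kato-type inequality $\int_M|\nabla \sqrt{f_\tau}|^2\le\int_M g_\tau=\tau\langle\nabla s,\nabla s\rangle_{L^2}$.

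It therefore suffices to establish the scalar spectral gap: there exists $c_0>0$ such that for every $u\in H^1(M)$
\begin{equation*}
\int_M|\nabla u|^2+\int_M\frac{\scal_g}{4}u^2\;\ge\; c_0^2\int_M u^2.
\end{equation*}
This is the statement that the elliptic operator $-\Delta+\scal_g/4$ on $L^2(M)$ has spectrum contained in $[c_0^2,\infty)$. Its spectrum is discrete and non-negative because $\scal_g\ge 0$, and $0$ cannot be an eigenvalue: any such eigenfunction $u$ would satisfy $\int|\nabla u|^2+\frac{\scal_g}{4}u^2=0$, forcing $u$ to be constant and to vanish on the non-empty open set $\{\scal_g>0\}$, so $u\equiv 0$. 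Taking $c_0^2:=\lambda_1(-\Delta+\scal_g/4)>0$ closes the chain of reductions.

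I expect the main obstacle to be the Kato step near the zero set of $f_\tau$, which must be handled via the $\sqrt{f_\tau+\epsilon}$ regularisation indicated above. The other care-demanding point is the passage from an $A$-valued form estimate to invertibility of the unbounded regular operator $D_E$: this is standard, but unlike in the classical case it cannot rely on compactness of the resolvent and must instead be extracted from the functional calculus for self-adjoint regular operators on Hilbert $C^*$-modules.
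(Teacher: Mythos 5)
Your proof is correct, but it takes a genuinely different route from the paper. The paper establishes the spectral gap by working entirely with $A$-valued quantities: it takes $s\in\im f_c(D_E)$, extracts from Schr\"odinger--Lichnerowicz the localized bound $\int_U\abs{s}^2\le (c^2/a)\abs{s}^2_{L^2}$ near a point of positive scalar curvature, introduces a cutoff $\phi$ vanishing near that point, and applies an $A$-module version of the Poincar\'e inequality to $\tilde s=\phi s$; combining the resulting inequalities forces $c>c_0$ for an explicit geometric constant. Your argument instead reduces the whole $A$-valued positivity to a scalar Rayleigh-quotient problem by testing against states: the inner-product compatibility of the flat connection plus Cauchy--Schwarz in the GNS representation gives the Kato-type inequality $\int_M\abs{\nabla\sqrt{f_\tau}}^2\le\tau\langle\nabla s,\nabla s\rangle_{L^2}$, after which the estimate is the statement $\lambda_1(-\Delta+\scal_g/4)>0$, which holds by compactness of $M$ and the vanishing-kernel argument. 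Both routes correctly address the real obstacle the paper flags (absence of discrete spectrum, so $\ker D_E=0$ alone is insufficient): the paper does it via $f_c(D_E)$, you do it via a uniform $A$-valued form bound feeding into the regular-operator functional calculus (via the bounded transform, $\langle T^2s,s\rangle\ge c_0^2\langle s,s\rangle$ does give $\spec(T)\cap(-c_0,c_0)=\emptyset$, so your reduction is sound). What each buys: the paper's argument stays elementary and yields a fully explicit $c_0$ in terms of $a$, $C_g$, $C_{g,U}$; yours is more conceptual, identifies the arguably natural constant $c_0^2=\lambda_1(-\Delta+\scal_g/4)$, and isolates the two technically delicate points (the $\sqrt{f_\tau+\epsilon}$ regularization near the zero set, and the form-to-spectrum step for regular operators) cleanly. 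If you write this up, be sure to justify that smooth sections form a core for $D_E$ so the form estimate extends to the full domain, and that weak $H^1$-lower-semicontinuity closes the $\epsilon\to 0$ limit in the Kato step.
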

\begin{proof}
  Instead of arguing with $s\in\ker(D_E)$, consider for $ c>0$ a function
  $f_c\colon \reals\to[0,1]$ with $f_c(x)>0$ if and only if $\abs{x}<c$ and
  then $f_c(D_E)$. This is a replacement for the spectral
  projector $P_c:=\chi_{[-c,c]}(D_E)$ which in general one can't build because
  the Hilbert $A$-module morphisms don't form a von Neumann algebra. We will
  show that for $c$ sufficiently
  small, $\im(f_c(D_E))=\{0\}$. This implies that $f_c(D_E)=0$ and, by the
  choice of $f_c$, that $\spec(D_E)\cap (-c,c)=\emptyset$. By contraposition,
  assume that $s\in\im(f_c(D_E))$ with 
  $|s|_{L^2}\ne 0$. We will show that this implies that $c>c_0$ for some $c_0>0$
  depending on the geometry of $(M,g)$.

  In the following, the norms and inner products have values in $A$, and the
  inequalities $a\le b$ refer to the partial order in $A$.

  First, we have $|D_E^ks|_{L^2} \le c^k$ for all $k>0$.

  By the Sobolev embedding theorem, $s$ is smooth and there are a priori
  estimates (depending on the geometry of $M$) for the supremum norm of $s$ and
  all its covariant derivatives.

  Next, the Schr\"odinger-Lichnerowicz formula implies
  \begin{equation*}
    c^2\abs{s}_{L^2}^2 \ge \abs{D_Es}^2_{L^2} = \abs{\nabla s}^2_{L^2} + \int_{M} \frac{\scal_g(x)}{4}\innerprod{s(x),s(x)}_x\,dvol_g.
  \end{equation*}
Because $\scal_g(x)\ge 0$
  for all $x\in M$ this implies
  \begin{equation}\label{eq:SL}
    \abs{\nabla s}^2_{L^2} \le c^2\abs{s}_{L^2}^2\quad\text{and} \qquad \int_{M}
    \frac{\scal_g(x)}{4}\abs{s(x)}^2_x\,dvol_g \le c^2\abs{s}_{L^2}^2.
  \end{equation}
 Choose an small open disk $U=B_{2r}(x_0)$ around $x_0$ such that $\scal_g(x)\ge 4a>0$ for $x\in
 U$ (note that $U$ and $a$ depend only on $g$).

 We obtain then from \eqref{eq:SL}
 \begin{equation}
   \label{eq:local_est}
   \int_U \abs{s(x)}^2_x \le \int_U \frac{\scal_g(x)}{4a} \abs{s(x)}^2_x \le
   \int_M \frac{\scal_g(x)}{4a}\abs{s(x)}_x^2 \le \frac{c^2}{a} \abs{s}^2_{L^2}.
 \end{equation}

Choose a smooth cutoff function $\phi\colon M\to [0,1]$ which is equal to $1$
outside $B_{2r}(x_0)$ and vanishes on $B_r(x_0)$ and consider $\tilde
s:=\phi\cdot s$. Note that there is $C_g>0$ depending only on the geometry of
$M$ such that
\begin{equation}\label{eq:Cg}
\abs{\nabla \phi(x)}^2\le C_g\quad\forall x\in U,\qquad \nabla\phi(x) =0
\forall x\in M\setminus U.
\end{equation}


 Because $M$ is connected, we can apply the Poincar\'e inequality
 \cite[Proposition 5.2]{Taylor} (extended to
 sections of  Hilbert $A$-module
 bundles) for the
 subset $B_r(x_0)$ inside $M$:
  \begin{equation}\label{eq:Poin_applied}
   \abs{ \tilde s}^2_{L^2} \le C_{g,U} \abs{\nabla \tilde s}^2_{L^2} =
   C_{g,U}\abs{(\nabla\phi) s+\phi\nabla s}^2_{L^2} \le 2C_{g,U}(\abs{(\nabla \phi )s}_{L^2}^2 +
   \abs{\phi\nabla s}_{L^2}^2)
 \end{equation}
with constant $C_{g,U}$ depending only on the set $U$ and the geometry $g$ of
$M$.

Note finally that
\begin{equation}\label{eq:s_tilde}
  \abs{\tilde s}^2_{L^2} = \int_M \abs{s(x)}^2_x + \int_U (\phi(x)^2-1)
  \abs{s(x)}_x^2 \ge \abs{s}^2_{L^2} - \int_U \abs{s(x)}^2_x
  \stackrel{~\eqref{eq:local_est}}{\ge} (1-\frac{c^2}{a}) \abs{s}^2_{L^2},
\end{equation}
while on the other hand
\begin{equation}
  \label{eq:nabla_phi_s}
  \abs{(\nabla \phi)s}^2_{L^2} = \int_M \abs{\nabla\phi(x)}^2\abs{s(x)}_x^2
  \stackrel{~\eqref{eq:Cg}}{\le} \int_U C_g \abs{s(x)}^2_x
  \stackrel{~\eqref{eq:local_est}}{\le} \frac{C_g}{a} c^2 \abs{s}^2_{L^2}.
\end{equation}

Combining \eqref{eq:Poin_applied} with \eqref{eq:s_tilde},
\eqref{eq:nabla_phi_s}, and \eqref{eq:SL} we finally obtain
\begin{equation*}
  (1-\frac{c^2}{a})\abs{s}^2_{L^2} \le \abs{\tilde s}^2_{L^2} \le 2 C_{g,U}
  (\frac{C_g}{a}c^2 +c^2) \abs{s}^2_{L^2}.
\end{equation*}
As $s\ne 0$ and therefore $\abs{s}^2_{L^2}>0$ in $A$, this implies that $c$
must be sufficiently large, explicitly
\begin{equation*}
c> c_0:= \left( \frac{2C_{g,U} C_g +1}{a} + 2 C_{g,U}\right)^{-1/2}.
\end{equation*}

  Now, the constants $C_{g,U}$, $C_g$, and $a$ depend only on the geometry of
  $M$ 
  and the assertion follows.
\end{proof}

\noindent\textbf{Acknowledgements.} Thanks to Rudolf Zeidler for useful
comments and pointing out inaccuracies in a previous version of this note.

\bibliography{Invertible.bib}

\end{document}